\newtheorem{theorem}{Theorem}
\newtheorem{pro}{Proposition}
\newtheorem{corol}{Corollary}
\newproof{proof}{Proof}
\def\proofend{$\blacktriangle$\vspace{0.3em}\par}
\journal{Discrete Applied Mathematics}
\begin{document}

\begin{frontmatter}

\title{On  the number of transversals in latin squares}
%\tnotetext[mytitlenote]

%% Group authors per affiliation:
\author{Vladimir N. Potapov\fnref{fn1}}
\ead{vpotapov@math.nsc.ru}

 \fntext[fn1]{The work was funded by the
Russian Science Foundation (grant No 14-11-00555).  }

\address{Sobolev Institute of Mathematics, 4 Acad. Koptyug Avenue, Novosibirsk, Russia}
%\fntext[myfootnote]{Since 1880.}

\begin{abstract}
The logarithm of the maximum number of transversals over all latin
squares of order $n$ is greater than $\frac{n}{6}(\ln n+ O(1))$.
\end{abstract}

\begin{keyword}
transversal, latin square, Steiner triple system.
 \MSC[2010] 05B15
\end{keyword}

\end{frontmatter}

%\linenumbers

\section{Introduction}

A {\it latin square} of order $n$ is an $n\times n$ array of $n$
symbols  in which each symbol occurs exactly once in each row and in
each column. This property ensures that a latin square is the Cayley
table of a finite quasigroup. It is often convenient to represent a
latin square as a graph of the corresponding quasigroup, i.\,e. as a
set of ordered triples. Without loss of generality  suppose that the
set of symbols of the latin square is $\{0,1,\dots,n-1\}$. A latin
square $A$ of order $n$ is said to contain a {\it proper latin
subsquare} of order $m$, $1 < m < n$, if there exists an
intersection of $m$ rows and  $m$  columns within $A$ that is also a
latin square.

 A diagonal of a square is a set of entries that
contains exactly one representative of each row and column. A {\it
transversal} is a diagonal in which no symbol is repeated. A pair of
latin squares $A = (a_{ij})$ and $B = (b_{ij})$ of order $n$ are
said to be {\it orthogonal} mates if the $n^2 $ ordered pairs
$(a_{ij} , b_{ij})$ are distinct. Thus, if we look at all $n$
occurrences of a given symbol in $B$, the corresponding positions in
$A$ must form a transversal.

Denote by $t(A)$ the number of transversals of a latin square $A$.
Define $T(n)=\max t(A)$ to be  the maximum number of transversals
over all the latin squares of order $n$.  The best upper bound of
$T(n)$ is  due to Taranenko \cite{TA}

\begin{equation}\label{tAT}  T(n)\leq n^ne^{-2n+o(n)}.
\end{equation}

 Let $B_n$
be the Cayley table of the cyclic group of order $n$. Vardi supposed
that there exist two real constants $c_1$ and $c_2$ such that
$c_1^nn! \leq t(B_n) \leq c_2^nn!$, where $0 < c_1 < c_2 < 1$ and
$n> 3$ is odd, but the known lower bound of $T(n)$ is only
exponential (see \cite{W07}, \cite{W11}).   Cavenagh and Wanless
\cite{CW10} proved that if $n$ is a sufficiently large odd integer
then $t(B_n)> (3.246)^n$. By MacNeish's theorem \cite{HCD}, the
following is true.
\begin{pro}
If $n=p_1^{i_1}\cdots p_k^{i_k}$, where numbers $p_i$ are prime, and
$m=\min p_j^{i_j}$, then there exists a set of $m-1$ mutually
orthogonal latin squares which include $B_n$.
\end{pro}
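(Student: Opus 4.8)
The plan is to prove this proposition using MacNeish's theorem, which I can cite directly as stated in the reference. MacNeish's theorem states that if $N(n)$ denotes the maximum number of mutually orthogonal latin squares (MOLS) of order $n$, then $N(mn) \geq \min(N(m), N(n))$, and for a prime power $q$ one has $N(q) = q-1$. The key observation I would exploit is that the cyclic group table $B_n$ is not merely one of the squares produced by the generic MacNeish construction — it arises specifically as the Cayley table of $\mathbb{Z}_n$, and I need the produced MOLS set to contain $B_n$ itself.

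Let me reason about the structure. First I would write $n = p_1^{i_1}\cdots p_k^{i_k}$ and set $q_j = p_j^{i_j}$, so that by the Chinese Remainder Theorem the cyclic group $\mathbb{Z}_n$ decomposes as the direct product $\mathbb{Z}_{q_1}\times\cdots\times\mathbb{Z}_{q_k}$. For each prime power $q_j$, the field $\mathrm{GF}(q_j)$ (or more carefully, a complete set of MOLS of order $q_j$) yields $q_j-1$ mutually orthogonal latin squares, and among these one can arrange that the additive structure gives the Cayley table of the additive group, which for the $p$-group component contains the cyclic shift pattern. The heart of the argument is that the product construction for MOLS is compatible with taking direct products of the underlying quasigroups: if squares $L^{(1)},\dots,L^{(s)}$ on symbol set $S$ are mutually orthogonal and $M^{(1)},\dots,M^{(s)}$ on symbol set $T$ are mutually orthogonal, then the coordinatewise products on $S\times T$ are again mutually orthogonal. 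Applying this across all $k$ components and taking $s = m-1 = \min_j(q_j - 1)$ of the squares from each factor, I obtain $m-1$ MOLS of order $n$.

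The step I expect to be the main obstacle is verifying that $B_n$ itself can be taken as one of these $m-1$ squares, rather than merely appearing as some square orthogonal to the rest. The resolution is to select, from each prime-power factor $\mathrm{GF}(q_j)$, the latin square corresponding to the operation $x + y$ (addition in the additive group $\mathbb{Z}_{q_j}$, which is the Cayley table $B_{q_j}$ when $q_j$ is prime, and can be chosen as the additive-group table in the elementary abelian case). The coordinatewise product of the additive tables across all factors is precisely addition in $\mathbb{Z}_{q_1}\times\cdots\times\mathbb{Z}_{q_k}\cong\mathbb{Z}_n$, which is $B_n$ after the CRT relabeling. Since the additive table is one member of the complete MOLS family in each component and the product construction preserves mutual orthogonality, $B_n$ sits inside a family of $m-1$ MOLS as required. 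I would conclude by noting that the bound $m-1$ is tight precisely because the smallest factor $q_j = m$ admits at most $m-1$ MOLS, so this is the best guarantee MacNeish's method provides.
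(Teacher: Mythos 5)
The paper gives no actual proof of this proposition --- it is asserted in a single line as a consequence of MacNeish's theorem from \cite{HCD} --- so the only thing to assess is whether your argument stands on its own, and it does not: the step you yourself single out as ``the main obstacle'' is a genuine gap. The complete set of $q_j-1$ MOLS of order $q_j=p_j^{i_j}$ obtained from $\mathrm{GF}(q_j)$ via $L_a(x,y)=ax+y$ contains the Cayley table of the \emph{additive group of the field}, which is the elementary abelian group $(\mathbb{Z}_{p_j})^{i_j}$ and not the cyclic group $\mathbb{Z}_{q_j}$ whenever $i_j>1$. Consequently the coordinatewise product of the addition tables is the Cayley table of $\prod_j(\mathbb{Z}_{p_j})^{i_j}$, which is isomorphic to $\mathbb{Z}_n$ only when $n$ is squarefree; your parenthetical ``can be chosen as the additive-group table in the elementary abelian case'' concedes exactly this difficulty without resolving it. Worse, no repair is possible in full generality, because the statement as written fails whenever $4\mid n$: then $m\geq 3$, so the claim would give $B_n$ an orthogonal mate, yet the Cayley table of a cyclic group of even order has no transversal at all (for $n=4$, summing over a putative transversal gives $2(0+1+2+3)\equiv 0+1+2+3\pmod 4$, i.e.\ $0\equiv 2$), let alone an orthogonal mate.

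What can be salvaged: for squarefree $n$ your product argument works verbatim, since $\prod_j\mathbb{Z}_{p_j}\cong\mathbb{Z}_n$. For general $n$ the elementary construction that genuinely contains $B_n$ is linear over $\mathbb{Z}_n$ itself: $L_a(x,y)=ax+y\bmod n$ is a latin square iff $\gcd(a,n)=1$, and $L_a$, $L_b$ are orthogonal iff $\gcd(a-b,n)=1$, so taking $a=1,\dots,p-1$ with $p$ the smallest prime divisor of $n$ yields $p-1$ MOLS containing $B_n=L_1$. This gives only $\min_j p_j-1$ rather than $\min_j p_j^{i_j}-1$ squares, but it covers everything the paper actually uses: the sole application of the proposition is that for odd $n$ the square $B_n$ (equivalently its isotope $B'_n$) has an orthogonal mate and hence $n$ disjoint transversals, which follows from $p\geq 3$. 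If you want a statement your method proves, either restrict the proposition to squarefree $n$, or replace ``include $B_n$'' by ``include the Cayley table of $\prod_j(\mathbb{Z}_{p_j})^{i_j}$''.
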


\section{Transversals in Steiner latin squares}

A set of $3$-element subsets (triples) of $n$-element set is called
{\it Steiner triple system} (STS) if each pair of elements is
contained in exactly one triple. A STS consists of $n(n-1)/6$
triples. A well-known necessary and sufficient condition for the
existence of STS is that $n\equiv 1\ {\rm or}\ 3\ {\rm mod}\ 6$.

As mentioned above, a latin square can be represented as the graph
of a quasigroup, i.\,e. as a set of ordered triples of the
$n$-element set such that each pair of elements occurs in each pair
of positions and a pair of elements of any triple defines the third
element of the triple. The first and the second elements of triples
define row and column, and the third element defines the symbol in
the corresponding entry of a latin square. Thus,  given a STS,  we
can obtain a latin square by replacing each unordered triple with
the six ordered triples and by adding $n$ triples of the form
$(a,a,a)$. This latin square is called Steiner (it is a table of a
Steiner quasigroup). By the inclusion-exclusion principle and the
definition of STS, it is easy to prove the following proposition.

\begin{pro}
 For any STS of order $n$  the union of $p$ disjoint triples of the STS
  intersects with  at most $ s(p)=3p(\frac{n-1}{2}-\frac{3p-1}{3})$ triples of the STS.
\end{pro}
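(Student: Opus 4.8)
The plan is to count, via inclusion--exclusion, the number $N$ of STS-triples that meet the point set $S$ obtained as the union of the $p$ disjoint triples; since the triples are pairwise disjoint, $S$ consists of exactly $3p$ distinct points, and $N$ is precisely the quantity the proposition bounds. For each point $x$ let $F_x$ denote the family of STS-triples containing $x$. Because every point lies in exactly $\frac{n-1}{2}$ triples (each of the $n-1$ pairs through $x$ determines a unique triple, and every such triple consumes two of these pairs), we have $|F_x|=\frac{n-1}{2}$, and the object to estimate is $N=|\bigcup_{x\in S}F_x|$.

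First I would apply inclusion--exclusion to this union. The crucial simplification is that a triple has only three elements, so any intersection of four or more of the families $F_x$ is empty and only the first three terms survive. The single-point term contributes $\sum_{x\in S}|F_x|=3p\cdot\frac{n-1}{2}$. For a pair $\{x,y\}\subseteq S$ the intersection $F_x\cap F_y$ consists of the triples through both $x$ and $y$, which by the defining property of an STS is a single triple; hence the two-point term contributes $\frac{3p(3p-1)}{2}$, the number of pairs of points in $S$. For a set $\{x,y,z\}\subseteq S$ the intersection $F_x\cap F_y\cap F_z$ is nonempty exactly when $\{x,y,z\}$ is itself a triple of the STS. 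Writing $n_3$ for the number of STS-triples entirely contained in $S$, inclusion--exclusion then yields the exact identity $N=3p\cdot\frac{n-1}{2}-\frac{3p(3p-1)}{2}+n_3$.

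It then remains to bound $n_3$ from above, and here I would double-count pairs: each of the $\frac{3p(3p-1)}{2}$ pairs inside $S$ lies in exactly one STS-triple, and each triple fully contained in $S$ accounts for three distinct such pairs, so $3n_3\le\frac{3p(3p-1)}{2}$, giving $n_3\le\frac{p(3p-1)}{2}$. Substituting this into the identity for $N$ gives $N\le 3p\cdot\frac{n-1}{2}-\frac{3p(3p-1)}{2}+\frac{p(3p-1)}{2}=3p\cdot\frac{n-1}{2}-p(3p-1)$, which is exactly $s(p)=3p\left(\frac{n-1}{2}-\frac{3p-1}{3}\right)$.

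I expect the main obstacle to be the control of $n_3$ rather than the inclusion--exclusion itself: once one observes that families indexed by four or more points never intersect, the expansion is forced and the first two terms are immediate from the STS axioms, but bounding the number of fully-contained triples requires noticing that the pairs they consume are disjoint. As a sanity check, for $p=1$ the set $S$ is a single triple, $n_3=1$, and the identity returns $N=\frac{3(n-1)}{2}-2=s(1)$, so the bound is tight in that case.
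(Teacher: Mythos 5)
Your proof is correct and follows essentially the same route as the paper's: inclusion--exclusion truncated at the third term (since no triple has four points), with $\frac{n-1}{2}$ triples per point, exactly one triple per pair, and the number of fully contained triples bounded by $\frac{3p(3p-1)}{6}$ via counting the three pairs each such triple uses. Your write-up is in fact slightly more explicit than the paper's, which states the bound on fully contained triples without spelling out the double-counting justification.
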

\begin{proof}
Let $V$ be the union of  $p$ disjoint triples. For each point $v\in
V$ there are $(n-1)/2$ triples of the STS that include $v$.  Each
pair $v_1,v_2\in V$ is contained in some triple of the STS.
Consequently, $\frac{3p(3p-1)}{2}$ triples of the STS occur again.
The number of triples $v_1,v_2,v_3\in V$ included in the STS isn't
more than $\frac{3p(3p-1)}{6}$. So, by the inclusion-exclusion
principle $V$ intersects with at most
$3p(\frac{n-1}{2}-\frac{3p-1}{2}+\frac{3p-1}{6})$ triples of the
STS. \proofend
\end{proof}

\begin{theorem}\label{t01transv}
If $S_n$ is a Steiner latin square  then $t(S_n)\geq \frac{6^{\lceil
(n-1)/6\rceil-1}\lfloor\frac{n}{3}\rfloor!}{\lceil\frac{n-1}{6}\rceil!\lceil\frac{n-1}{6}\rceil}$.
\end{theorem}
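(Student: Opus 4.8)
The plan is to build transversals from partial parallel classes of the underlying STS together with the main diagonal, and to count such classes by a greedy argument driven by Proposition 2. Write $m=\lceil(n-1)/6\rceil$ and $N=n(n-1)/6$ for the total number of triples. First I would fix a set $\{T_1,\dots,T_m\}$ of pairwise disjoint triples of the STS (a partial parallel class), place the diagonal entries $(a,a,a)$ on all $n-3m$ points not covered by the class, and on each $T_i=\{x_i,y_i,z_i\}$ choose one of its two cyclic orientations, $(x_i,y_i,z_i),(y_i,z_i,x_i),(z_i,x_i,y_i)$ or $(x_i,z_i,y_i),(z_i,y_i,x_i),(y_i,x_i,z_i)$. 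Each orientation uses the three points of $T_i$ once as rows, once as columns and once as symbols, so the resulting set of $n$ ordered triples is a transversal of $S_n$. Reading off the off-diagonal entries of such a transversal recovers both the class and the chosen orientations, so the map from (class, orientation vector) to transversals is injective; hence $t(S_n)\ge 2^{m}\cdot P_m$, where $P_m$ is the number of partial parallel classes of size $m$.

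Next I would bound $P_m$ from below. Building a partial parallel class greedily, suppose $i$ disjoint triples have already been chosen; by Proposition 2 their union meets at most $s(i)$ triples, so at least $N-s(i)$ triples remain available to extend the class, independently of the previous choices. Therefore the number of ordered sequences of $m$ pairwise disjoint triples is at least $\prod_{i=0}^{m-1}(N-s(i))$, and dividing by the $m!$ orderings of each class gives $P_m\ge \frac{1}{m!}\prod_{i=0}^{m-1}(N-s(i))$.

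The decisive step is the evaluation of this product. Substituting $s(p)=3p\left(\frac{n-1}{2}-\frac{3p-1}{3}\right)$ and, say, $n=6m+1$, the quantity $N-s(i)$ is a quadratic in $i$ whose discriminant is the perfect square $(3m+1)^2$; it therefore factors as $N-s(i)=(m-i)(6m+1-3i)$. Consequently $\prod_{i=0}^{m-1}(N-s(i))=m!\prod_{j=0}^{m-1}(3m+4+3j)$, the two factors contributing $\prod_{i=0}^{m-1}(m-i)=m!$ and an arithmetic progression with common difference $3$. This cancels the $m!$ from the ordering count, and after extracting a factor $3$ from each of the $m$ terms of the progression a term-by-term comparison with $(m+1)(m+2)\cdots(2m)=\frac{(2m)!}{m!}$ yields $t(S_n)\ge 2^{m}\prod_{j=0}^{m-1}(3m+4+3j)\ge 6^{m}\frac{(2m)!}{m!}$, which exceeds the claimed bound $\frac{6^{m-1}(2m)!}{m!\,m}$. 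The threshold $m=\lceil(n-1)/6\rceil$ is natural here: it is precisely the value at which $N-s(m)=0$, i.e. where Proposition 2 stops guaranteeing that the class can be extended.

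The case $n\equiv3\pmod 6$ is entirely analogous: there $\lfloor n/3\rfloor=2m-1$, the discriminant of $N-s(i)$ is again a perfect square, $N-s(i)=(2m-1-i)(3m-2-3i)$, and the same computation delivers the stated estimate. I expect the main obstacle to lie entirely in this last computation—obtaining the factorisation of $N-s(i)$ and carrying out the final term-by-term comparison cleanly—since everything before it is a routine greedy count once Proposition 2 is in hand and the injectivity of the (class, orientation) encoding is checked.
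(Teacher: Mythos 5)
Your proof is correct and follows essentially the same route as the paper: a greedy selection of $\lceil(n-1)/6\rceil$ pairwise disjoint triples controlled by Proposition~2, a factor of $2$ per triple for the two cyclic orientations, division by $m!$ for the ordering, and the diagonal entries $(e,e,e)$ on the uncovered points. Your explicit factorisation $N-s(i)=(m-i)(6m+1-3i)$ and the term-by-term comparison merely spell out the product estimate that the paper leaves implicit, and they check out in both congruence classes.
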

\begin{proof}
Consider the STS corresponding to $S_n$. By definition, for each
triple $(a,b,c)$ of the STS there is the latin subsquare $\left(
 \begin{array}{ccc}
a & c & b \\
c & b & a \\
b& a &  c\\
  \end{array} \right)$ in the intersection of rows and columns labeled
  by $a, b, c$. This subsquare has three transversals.
  We will construct transversal $T$ of $S_n$ recursively. In the each step we will take
  a triple of STS  that is disjoint with triples taken before and we will add three elements corresponding to
  this triple to $T$.  Let $K(p)$ be the set
  of elements of triples taken after $p$ steps, $K(0)=\varnothing$ and $|K(p)|=3p$.
   By Proposition 2, elements of $K(p)$
  belong to $s(p)=3p(\frac{n-1}{2}-\frac{3p-1}{3})$ triples at most.
Thus at the $(p+1)$th step, it is possible to take one of $n(n-1)/6
-s(p)= (n-3p)(n-6p-1)/6$ triples that don't intersect with $K(p)$.
If we take the triple $(a,b,c)$ then we get $K(p+1)=K(p)\cup
\{a,b,c\}$ and add $3$
 entries (ordered triples) $\{(a,c,b),  (b,a,c), (c,b,a)\}$ or
  $\{ (a,b,c),(b,c,a), (c,a,b) \}$ to
  $T$.
If $n(n-1)/6 \leq s(p)$ i.e. $p\geq p_0=\lceil(n-1)/6\rceil$ then we
add to $T$ entries $(e,e,e)$ for all $e\not\in K(p)$. Consequently,
there exist more than
$\frac{1}{p_0!}\prod\limits_{p=0}^{p_0-1}2\lceil(n-3p)(n-6p-1)/6\rceil\geq
6^{p_0-1}\lfloor\frac{n}{3}\rfloor!(p_0-1)!/((\lfloor\frac{n}{3}\rfloor-p_0)!p_0!)$
variants to choose a transversal. \proofend
\end{proof}

In the last part of the note the proposed construction  of
transversals is adapted to latin squares of any large order.

Bose (see \cite{HCD}) proposed the following construction of STS.
Let $A$ be a latin square of order $n$ corresponding to idempotent,
commutative quasigroup. Put
$$S^1=\{((x,i),(y,i),(z,i+1\ {\rm mod}\ 3)) \ |\ (x,y,z)\in A, i\in
\{0,1,2\}\},$$
$$ S^2=\{((x,0),(x,1),(x,2)) \ | \ x\in \{0,1,\dots,n-1\}\}.$$
Then $S^1\cup S^2$ is a STS of order $3n$.  Consider a latin square
$S_{3n}$ of order $3n$ corresponding to a STS obtained by Bose's
construction from a latin square $A$ of order $n$. Each transversal
$T$ of $A$ generates the transversal $T'$ of $S_{3n}$ by the
following rule. If $(a,b,c)\in T$ then
$((a,0),(b,0),(c,1)),((a,1),(b,1),(c,2)), ((a,2),(b,2),(c,0))\in
T'$, if  $(a,a,a)\in T$ then
$((a,0),(a,1),(a,2)),((a,1),(a,2),(a,0)), ((a,2),(a,0),(a,1))\in
T'$.

Let $A$ be a latin square of order $n$ with $k$ disjoint
transversals $T_0,\dots,T_{k-1}$. Construct the latin square
$\widehat{A}_k$ of order $n+k$ in the following way. If $(a,b,c)\in
A\setminus\cup_{i=0}^{k-1} T_i$ then $(a,b,c)\in \widehat{A}_k$. If
$(a,b,c)\in T_i$ then $(a,b,n+i),(a,n+i,c),(n+i,b,c)\in
\widehat{A}_k$. Moreover in the intersection of rows and columns
labeled by additional symbols $n,\dots,n+k-1$ we  substitute a latin
square $C$ of order $k$ on the alphabet $\{n,\dots,n+k-1\}$. For
example, if  $A=\left(
 \begin{array}{ccc}
0 & 1 & 2 \\
1 & 2 & 0 \\
2& 0 &  1\\
  \end{array} \right)$ and $T_0=\{(0,1,1),(1,2,0),(2,0,2)\}$,
   $T_1=\{(0,2,2),(1,0,1),(2,1,0)\}$ then
$\widehat{A}_2=\left(
 \begin{array}{ccc|cc}
0 & 3 & 4  &1 & 2\\
4 & 2 & 3 & 0 & 1\\
3& 4 &  1 & 2 & 0\\ \hline
2 & 1 & 0 & 3 & 4\\
1& 0 &  2 & 4 & 3\\
  \end{array} \right)$.
Let $t(A;T_0,\dots,T_{k-1})$ be the number of transversals of $A$
which don't intersect with  transversals $T_0,\dots,T_{k-1}$. It is
easy to see that $t(\widehat{A}_k)\geq t(C)t(A;T_0,\dots,T_{k-1})$.
If $k\neq 2$ then there exists $C$ with $t(C)\geq 1$.

It is easy to see that the quasigroup $q$ defined by the rule
$q(x,y)+q(x,y)\equiv x+y\ {\rm mod}\ n$ is idempotent, commutative
and isotopic to cyclic group of order $n$ as $n$ is odd. By
Proposition 1, the  latin square $B'_n$ that corresponds to
quasigroup $q$ has $n$ disjoint transversals. By means of Bose's
construction, we can obtain the Steiner latin square $S_{3n}$ from
$B'_n$. $S_{3n}$ has at least $n$ disjoint transversals. Thus we can
construct a latin square $D=\widehat{(S_{3n})}_k$ of order $3n+k$,
$k\leq n$, as described above. For $D$ we can repeat the reasonings
of the proof of Theorem \ref{t01transv}. The only difference is the
number of variants to choose  a triple at each step. The number of
variants for $D$ is $kn$ less  than for $S_{3n}$.

\begin{corol}
 $\frac{n}{6}(\ln n+ O(1)) \leq \ln T(n)\leq n(\ln n -2+o(1))$ as $n\rightarrow \infty$.
\end{corol}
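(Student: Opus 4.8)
The plan is to treat the two inequalities separately, since they have unrelated origins. The upper bound $\ln T(n)\le n(\ln n-2+o(1))$ is merely the logarithm of Taranenko's bound (\ref{tAT}): from $T(n)\le n^ne^{-2n+o(n)}$ one reads off $\ln T(n)\le n\ln n-2n+o(n)$, which is exactly $n(\ln n-2+o(1))$. So nothing is required here beyond invoking (\ref{tAT}).

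The lower bound is where the work of the section pays off. When $n\equiv 1$ or $3\ {\rm mod}\ 6$ a Steiner latin square $S_n$ exists and $T(n)\ge t(S_n)$, so it suffices to take the logarithm of the bound in Theorem \ref{t01transv} and estimate it by Stirling's formula. Writing $p_0=\lceil(n-1)/6\rceil\sim n/6$, the dominant part is $\ln(\lfloor n/3\rfloor!)-\ln(p_0!)$, and I would check that the $n\ln n$ contributions combine as $\frac{n}{3}\ln\frac{n}{3}-\frac{n}{6}\ln\frac{n}{6}=\frac{n}{6}\ln n+O(n)$, while the term $(p_0-1)\ln 6$, the linear pieces of Stirling, and $-\ln p_0$ are all $O(n)$. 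This yields $\ln T(n)\ge\frac{n}{6}\ln n+O(n)=\frac{n}{6}(\ln n+O(1))$ for admissible $n$.

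To reach all large $n$, where no STS need exist, I would invoke the two constructions set up after Theorem \ref{t01transv}. For a target order $N$, choose $n$ to be the largest odd integer with $3n\le N$; then $k:=N-3n$ is bounded by a small constant, and if this forces the excluded value $k=2$ I would decrease $n$ by $2$, still keeping $k$ bounded. Bose's construction turns the order-$n$ square $B'_n$ into a Steiner square $S_{3n}$, and $\widehat{(S_{3n})}_k$ is then a latin square $D$ of order $N$ with $t(D)\ge t(C)\,t(S_{3n};T_0,\dots,T_{k-1})\ge t(S_{3n};T_0,\dots,T_{k-1})$. Rerunning the recursion of Theorem \ref{t01transv} on the STS of $S_{3n}$, now avoiding the $k$ reserved transversals, the number of admissible triples at each step drops by at most $kn$; since $k$ is bounded this only truncates the process by $O(1)$ steps and alters the logarithmic factors by a total of $O(N)$. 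Because $N=3n+O(1)$, the resulting bound $\frac{3n}{6}\ln(3n)+O(N)$ equals $\frac{N}{6}\ln N+O(N)$, and as $n$ ranges over odd integers these choices cover every large $N$.

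The main obstacle I anticipate is uniform bookkeeping rather than any new idea: I must confirm that $k$ can be kept bounded for every large $N$, and that the per-step loss of $kn$ choices, which bites hardest in the final steps where the available-triple count is itself only $O(n)$, never erodes the leading coefficient $1/6$. Once that is settled, the Stirling estimate and the interval-covering argument are routine, and combining the lower bound with the upper bound from (\ref{tAT}) gives the stated double inequality.
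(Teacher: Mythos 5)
Your proposal is correct and follows the paper's own route exactly: the upper bound is the logarithm of (\ref{tAT}), the lower bound for $n\equiv 1,3 \pmod 6$ comes from Theorem \ref{t01transv} via Stirling (your cancellation $\frac{n}{3}\ln\frac{n}{3}-\frac{n}{6}\ln\frac{n}{6}=\frac{n}{6}\ln n+O(n)$ is the right computation), and the remaining residues are handled by the $B'_n\to S_{3n}\to\widehat{(S_{3n})}_k$ construction with bounded $k$, which is precisely what the paper's ``analogously in other cases'' refers to. You in fact supply more of the bookkeeping (the choice of odd $n$, the $k=2$ exception, the effect of losing $kn$ variants per step) than the paper itself does.
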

The upper bound is provided by  (\ref{tAT}). The lower bound follows
from Theorem \ref{t01transv} and Stirling's formula if $n\equiv 1\
{\rm or}\ 3\ {\rm mod}\ 6$, and it is proved analogously in other
cases.

After submitting this paper to the journal, another paper \cite{GZ}
has appeared that contains a better  lower bound for the number of
transversals in latin squares. This lower bound is asymptotically
equal to the upper bound (\ref{tAT}).  However, the result in
\cite{GZ} is obtained using a non-constructive approach as oppose to
the method in our paper.

\section{Acknowledgments}
The author is grateful to Denis Krotov and Anna Taranenko for useful
discussions.

\end{document}